\documentclass[11pt,notitlepage]{article}

\textheight 220mm
\textwidth 165 mm
\hoffset -20mm
\voffset -15mm

\usepackage{amssymb}

\usepackage[dvips]{graphicx}
\usepackage{amsmath}

\newtheorem{theorem}{Theorem}[section]

\newtheorem{conjecture}[theorem]{Conjecture}
\newtheorem{corollary}[theorem]{Corollary}

\newtheorem{example}[theorem]{Example}

\newtheorem{lemma}[theorem]{Lemma}

\newtheorem{proposition}[theorem]{Proposition}
\newtheorem{remark}[theorem]{Remark}

\newenvironment{proof}[1][Proof]{\textbf{#1.} }{\ \rule{0.5em}{0.5em}}

\newcommand{\vs}[1]{\langle #1 \rangle}
\newcommand{\Q}{{\mathbb Q}}
\newcommand{\Z}{{\mathbb Z}}

\begin{document}

\title{On the product of vector spaces in a commutative field extension}

\date{}
\author{Shalom Eliahou, Michel Kervaire and C\'{e}dric Lecouvey}
\maketitle

\begin{abstract}
Let $K \subset L$ be a commutative field extension. Given 
$K$-subspaces $A,B$ of $L$, we consider the subspace $\langle AB \rangle$ 
spanned by the product set $AB=\{ab \mid a \in A, b \in B\}$. 
If $\dim_K A = r$ and $\dim_K B = s$, how small can the dimension of
$\vs{AB}$ be? In this paper we give a complete answer to this
question in characteristic 0, and more generally for separable extensions.
The optimal lower bound on $\dim_K \vs{AB}$ turns out, in this case, 
to be provided by the numerical function
$$
\kappa_{K,L}(r,s) = \min_{h} (\lceil r/h\rceil + \lceil s/h\rceil -1)h,
$$
where $h$ runs over the set of $K$-dimensions of all finite-dimensional
intermediate fields $K \subset H \subset L$. This bound is closely related
to one appearing in additive number theory.
\end{abstract}

\section{Introduction}

Let $K \subset L$ be an extension of commutative fields. Let $A,B \subset L$ be non-zero $K$-subspaces of $L$. We denote by
$$
\vs{AB}
$$
the $K$-subspace of $L$ generated by the product set
\begin{equation*}
AB = \{ab \mid a \in A, b\in B \}.
\end{equation*}

Of course, if $A,B$ are finite-dimensional, then so is $\vs{AB}$ which satisfies the easy estimates
$$
\max\{\dim_K A,\dim_K B\} \;\le\; \dim_K \vs{AB} \;\le\; (\dim_K A)(\dim_K B).
$$
The above lower bound is sharp in the very special circumstance $A=B=H$ where $H$ is an intermediate field extension $K \subset H \subset L$. But in general, if $\dim_K A, \dim_K B$ are specified in advance, how small can $\dim_K \vs{AB}$ be? In other words, given positive integers $r,s \le \dim_K L$, we define
$$
\mu_{K,L}(r,s) = \min \{\dim_K \vs{AB}\},
$$
where the minimum is taken over all $K$-subspaces $A,B$ of $L$ satisfying $$\dim_K A = r, \quad \dim_K B = s.$$ For example, one has $\mu_{K,L}(h,h)=h$ whenever $h = [H:K] = \dim_K H$ is the degree of a finite-dimensional intermediate field extension $K \subset H \subset L$. 

Perhaps surprisingly, the combinatorial function $\mu_{K,L}(r,s)$ can be explicitly determined for arbitrary $r,s$ under mild hypotheses, as we do here. Our answer is provided by the following numerical function. Define 
$$
\kappa_{K,L}(r,s) = \min_{h} (\lceil r/h\rceil + \lceil s/h\rceil -1)h,
$$
where $h = [H:K]$ runs over the set of $K$-dimensions of all finite-dimensional
intermediate fields $K \subset H \subset L$. 

For example, if $[L:K]$ is a prime number $p$, then the only admissible values for $h=[H:K]$ are 1 and $p$, whence $\kappa_{K,L}(r,s) = \min\{r+s-1,p\}.$ (See Example~\ref{cauchy}.) We shall prove the following result.
\begin{theorem}\label{kappaKL} Let $K \subset L$ be a commutative field extension in which every algebraic element of $L$ is separable over $K$. Then, for all positive integers $r,s \le \dim_K L$, we have
$$
\mu_{K,L}(r,s) = \kappa_{K,L}(r,s).
$$
\end{theorem}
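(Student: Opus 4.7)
The plan is to prove the equality by establishing the two inequalities $\mu_{K,L}(r,s)\le\kappa_{K,L}(r,s)$ and $\mu_{K,L}(r,s)\ge\kappa_{K,L}(r,s)$ separately. The first is achieved by an explicit construction inside a monogenic extension of an intermediate field, while the second will be reduced, via $H$-saturation, to a field-theoretic Kneser-type inequality which is the deep ingredient of the proof.

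For the upper bound, fix an intermediate field $K\subset H\subset L$ realising the minimum in the definition of $\kappa_{K,L}(r,s)$, set $h=[H:K]$, $a=\lceil r/h\rceil$, $b=\lceil s/h\rceil$, and write $r=(a-1)h+r_0$, $s=(b-1)h+s_0$ with $1\le r_0,s_0\le h$. A short case analysis shows that $[L:H]\ge a+b-1$: otherwise $[L:K]<(a+b-1)h$ is finite, but then $h'=[L:K]$ is itself admissible and yields $(1+1-1)h'=[L:K]<(a+b-1)h$, contradicting the minimality of $h$. Applying the primitive element theorem (valid since $L/H$ is separable) I pick $\theta\in L$ of degree at least $a+b-1$ over $H$, so that $1,\theta,\dots,\theta^{a+b-2}$ are $H$-linearly independent. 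Choosing $K$-subspaces $A_0,B_0\subset H$ of dimensions $r_0,s_0$, I set
$$A=H\oplus H\theta\oplus\cdots\oplus H\theta^{a-2}\oplus A_0\theta^{a-1},$$
$$B=H\oplus H\theta\oplus\cdots\oplus H\theta^{b-2}\oplus B_0\theta^{b-1}.$$
Then $\dim_K A=r$, $\dim_K B=s$, while $\vs{AB}\subseteq\bigoplus_{i=0}^{a+b-2}H\theta^i$ has $K$-dimension $(a+b-1)h=\kappa_{K,L}(r,s)$.

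For the lower bound I invoke the linear Kneser theorem of Hou--Leung--Xiang, which holds precisely under the separability hypothesis of the theorem: for any finite-dimensional nonzero $K$-subspaces $A,B\subseteq L$,
$$\dim_K\vs{AB}\ge\dim_K A+\dim_K B-[H:K],\qquad H=\{x\in L\mid x\vs{AB}\subseteq\vs{AB}\},$$
the stabiliser $H$ being automatically an intermediate field. Given $A,B$ with $\dim_K A=r$ and $\dim_K B=s$, set $C=\vs{AB}$, let $H$ be its stabiliser, and $h=[H:K]$. The $H$-module hulls $HA$ and $HB$ satisfy $\dim_K HA\ge\lceil r/h\rceil h$ and $\dim_K HB\ge\lceil s/h\rceil h$ (any $H$-submodule of $L$ containing an $r$-dimensional $K$-subspace has $K$-dimension a multiple of $h$ at least equal to $r$), and $\vs{(HA)(HB)}=HC=C$ retains the stabiliser $H$. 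Applying Hou--Leung--Xiang to the pair $(HA,HB)$ therefore gives
$$\dim_K C\ge\lceil r/h\rceil h+\lceil s/h\rceil h-h=(\lceil r/h\rceil+\lceil s/h\rceil-1)h\ge\kappa_{K,L}(r,s),$$
since $h=[H:K]$ is an admissible value in the definition of $\kappa_{K,L}(r,s)$.

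The main obstacle is clearly the Kneser-type inequality invoked in the lower bound: its proof relies on a delicate Dyson-style transform and genuinely depends on the separability assumption, as counterexamples are known for purely inseparable extensions. Once this result is granted, the entire argument collapses to the construction of $A,B$ in a monogenic extension plus elementary bookkeeping with ceilings and divisibility.
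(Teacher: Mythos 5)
Your proposal is correct and takes essentially the same route as the paper: the lower bound via Hou--Leung--Xiang applied to the saturated pair $\vs{HA},\vs{HB}$ is identical, and your upper-bound construction with powers of a primitive (or transcendental) element $\theta$ over a minimizing intermediate field $H$ is the same idea as the paper's Lemma on simple extensions combined with the Primitive Element Theorem, differing only in that you build subspaces of dimensions exactly $r,s$ by mixing full $H$-blocks with the partial blocks $A_0\theta^{a-1}$, $B_0\theta^{b-1}$, whereas the paper rounds up to multiples of $h_0$ and invokes monotonicity of $\mu_{K,L}$. One minor factual slip in your closing remark: no counterexamples to the Kneser-type inequality are known for inseparable extensions --- the paper in fact conjectures (following Hou) that it holds without any separability hypothesis.
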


There are close links between this result and additive number theory, as explained in Section~\ref{link}. The proof of Theorem~\ref{kappaKL} is split between Sections 3 and 4. After some examples in Section~\ref{examples}, we look more closely, in Section~\ref{galois}, at the case of finite Galois extensions. In the last two sections, we discuss the separability hypothesis in Theorem~\ref{kappaKL}.

\section{Links with additive number theory}\label{link}

The question explored in this paper is analogous to a classical one in groups, namely that of minimizing the cardinality of product sets $AB$ where $A,B$ run over all subsets of cardinality $r,s$ in a given group $G$. In multiplicative notation, this amounts to study the function
$$
\mu_G(r,s) = \min\{|AB| \; : \;  A,B \subset G, |A| = r, |B| = s\}.
$$ 
While unknown in general, this function has recently been fully determined in the abelian case. The answer is expressed in terms of the numerical function $\kappa_G(r,s)$ defined as follows. For any group $G$, let $\mathcal{H}(G)$ be the set of orders of finite subgroups of $G$, and set
$$
\kappa_G(r,s) = \min_{h \in \mathcal{H}(G)} (\lceil r/h\rceil + \lceil s/h\rceil -1)h
$$
for all positive integers $r,s \le |G|$. Here is the result obtained in \cite{abelian}.

\begin{theorem}\label{abelian} Let $G$ be an arbitrary abelian group. Then, for all positive integers $r,s \ge 1$, we have
$
\mu_G(r,s) = \kappa_G(r,s).
$
\end{theorem}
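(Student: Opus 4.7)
The plan is to establish the two inequalities $\mu_G(r,s) \le \kappa_G(r,s)$ and $\mu_G(r,s) \ge \kappa_G(r,s)$ separately. The first is a constructive upper bound based on the coset geometry of finite subgroups of $G$; the second is the deep half and will be extracted from Kneser's theorem on products of finite subsets of abelian groups.

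For the upper bound, fix $h \in \mathcal{H}(G)$ with associated finite subgroup $H \le G$ of order $h$, set $m = \lceil r/h\rceil$ and $n = \lceil s/h\rceil$, and suppose first that $G/H$ contains an element $g$ of order at least $m+n-1$. Let $A$ be the union of the $m-1$ complete cosets $H, gH, \ldots, g^{m-2}H$ together with any $r-(m-1)h$ elements of $g^{m-1}H$, and define $B$ analogously from $H, gH, \ldots, g^{n-1}H$. Then $|A|=r$, $|B|=s$, and
$$
AB \;\subset\; \bigcup_{i=0}^{m+n-2} g^i H,
$$
so $|AB| \le (m+n-1)h$. When no such $g$ exists, one uses a refined value $h' \in \mathcal{H}(G)$ obtained by enlarging $H$ to a finite subgroup $H' \le G$ inside which $A, B$ can be taken as unions of full cosets; this case still fits under the minimum defining $\kappa_G(r,s)$. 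Taking the minimum over $h$ thus yields $\mu_G(r,s) \le \kappa_G(r,s)$.

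For the lower bound, let $A, B \subset G$ satisfy $|A|=r$ and $|B|=s$. Kneser's theorem provides a finite subgroup
$$
H \;=\; \{\, g \in G \mid gAB = AB \,\},
$$
the stabiliser of $AB$ (necessarily finite since $AB$ is), so $h := |H| \in \mathcal{H}(G)$, together with the inequality
$$
|AB| \;\ge\; |AH| + |BH| - h.
$$
Writing $AH$ and $BH$ as unions of $k_A$ and $k_B$ cosets of $H$ respectively, one has $|AH| = k_A h$, $|BH| = k_B h$, and the trivial estimates $k_A \ge \lceil r/h\rceil$, $k_B \ge \lceil s/h\rceil$ yield
$$
|AB| \;\ge\; (k_A+k_B-1)h \;\ge\; (\lceil r/h\rceil+\lceil s/h\rceil-1)h \;\ge\; \kappa_G(r,s),
$$
completing the argument.

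The main obstacle is Kneser's theorem itself, whose proof proceeds by a subtle induction on $|AB|$ with a careful analysis of the stabiliser under merging operations; I would invoke it as a black box rather than reprove it. A secondary technical point is the arithmetic-progression construction in the upper bound, which fails verbatim when $G/H$ has no element of sufficiently large order (for example when $G/H$ is an elementary abelian $2$-group). This is finessed by passing to a larger intermediate finite subgroup, always available in an abelian group, which supplies a value $h' \in \mathcal{H}(G)$ at least as good; the minimum defining $\kappa_G$ thus remains realisable.
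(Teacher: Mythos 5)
Note first that the paper does not prove this theorem: it is imported verbatim from \cite{abelian}, and only the method is sketched (Kneser's theorem for the lower bound, the small sumsets property $\mu_G(r,s)\le r+s-1$ for the upper bound). Your lower-bound half is correct and is exactly that standard argument, mirroring Section~\ref{lower} of this paper in the linear setting: Kneser applied to $A,B$ with stabilizer $H$ of $AB$, then $|AH|,|BH|$ rounded up to multiples of $h=|H|$.

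The upper-bound half has a genuine gap. Your arithmetic-progression construction works only when $G/H$ has an element of order at least $m+n-1$, and your fallback --- ``enlarge $H$ to a finite subgroup $H'$, which supplies a value $h'\in\mathcal{H}(G)$ at least as good'' --- is false as stated. Concretely, take $G=(\Z/3\Z)^2$ and $r=s=4$. Then $\kappa_G(4,4)=\min\{7,\,9,\,9\}=7$, attained only at $h_0=1$; every nonidentity element of $G$ has order $3<m+n-1=7$, so the progression construction fails, and every proper enlargement $H'$ gives $(\lceil 4/h'\rceil+\lceil 4/h'\rceil-1)h'=9>7$, so passing to a larger subgroup can never certify $\mu_G(4,4)\le 7$. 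The bound is nevertheless attained, e.g.\ by $A=B=H_1\cup\{u\}$ with $H_1$ a subgroup of order $3$ and $u\notin H_1$, since then $AB\subset H_1\cup uH_1\cup\{u^2\}$ has at most $7$ elements. What is actually needed is the small sumsets property $\mu_{G}(r,s)\le r+s-1$ for an \emph{arbitrary} abelian group (applied to $G/H_0$ and pulled back along full cosets of $H_0$), and its proof is not a two-case dichotomy but an induction along a chain of finite subgroups, taking at each stage a union of full cosets plus a partial coset --- precisely the argument this paper carries out in the linear setting as Proposition~\ref{ssp general}. Supplying that induction (or an equivalent ``initial segments of a lexicographic order'' construction) is the missing step.
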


For instance, this contains the well-known Cauchy-Davenport theorem for cyclic groups $G$ of prime order $p$, namely $\mu_G(r,s) = \min\{r+s-1, p\}$ for all $1 \le r,s \le p$. See \cite{survey} for a survey of recent results on $\mu_G(r,s)$. 

The function $\kappa_G(r,s)$ appears in various guises and contexts, for instance as the Hopf-Stiefel function $r \circ s$ in algebraic topology or in the theory of quadratic forms. See \cite{hopfstiefel} for a survey on this ubiquitous function.

\smallskip

The reader will notice the close resemblance between Theorems ~\ref{kappaKL} and ~\ref{abelian}. The methods of proof are also quite similar. In order to prove that the kappa-function is a lower bound, the key tools are a theorem of Kneser for abelian groups \cite{kneser}, and a linear version of it for separable extensions~\cite{xiang}. Regarding the optimality of the bound, the key tool is the \textit{small sumsets property}, amounting to the inequality $\mu_G(r,s) \le r+s-1$ for abelian groups \cite{abelian}. The analogous estimate for field extensions $K \subset L$, namely $\mu_{K,L}(r,s) \le r+s-1$, plays the same role and will be shown to hold in full generality.

In Section~\ref{galois}, we shall see that both versions of the kappa-function, namely $\kappa_G$ for a group $G$ and $\kappa_{K,L}$ for a field extension $K \subset L$, actually coincide for finite Galois extensions with abelian Galois group $G$.

For general background on commutative field extensions and on additive number theory, we refer to \cite{lang} and \cite{nathanson}, respectively.

\section{Proof that $\kappa_{K,L}$ is a lower bound}\label{lower}
We now go back to the field extension setting. In order to prove inequality $\mu_{K,L}(r,s) \ge \kappa_{K,L}(r,s)$ of Theorem~\ref{kappaKL}, we shall need the following linear version \cite{xiang} of a famous theorem of Kneser \cite{kneser} in additive number theory.

\begin{theorem}[Hou, Leung and Xiang]
\label{linear Kneser} \label{TH_HLX} Let $K\subset L$ be a commutative field
extension in which every algebraic element of $L$ is separable over $K$. Let $A,B\subset L$ be nonzero finite-dimensional $K$-subspaces of $L$. Let $H$ be the stabilizer of $\vs{AB}$. Then 
\begin{equation*}
\dim _{K}\langle AB\rangle \geq \dim _{K}A+\dim _{K}B-\dim _{K}H.
\end{equation*}
\end{theorem}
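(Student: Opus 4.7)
The plan is to mimic Kneser's classical proof for abelian groups, translating each step into the language of $K$-subspaces of the field $L$. First, I would verify that the stabilizer $H=\{x\in L \mid x\vs{AB}\subseteq\vs{AB}\}$ is automatically a subfield of $L$ containing $K$: since $L$ is an integral domain, multiplication by any nonzero $x\in H$ is an injective $K$-linear endomorphism of the finite-dimensional space $\vs{AB}$, so $H$ embeds into $\mathrm{End}_K(\vs{AB})$. Hence $H$ is a finite-dimensional $K$-subalgebra of the field $L$, and any such subalgebra is itself a field.

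The next step is to reduce the problem to the case where $A$ and $B$ are themselves $H$-subspaces with trivial stabilizer. On the one hand, replacing $A,B$ by $\tilde A=HA$ and $\tilde B=HB$ does not enlarge $\vs{AB}$, because $H$ stabilizes $\vs{AB}$ and hence $\vs{\tilde A\tilde B}=H\cdot\vs{AB}=\vs{AB}$, while it can only increase $\dim_K A$ and $\dim_K B$. So it suffices to prove the inequality for $\tilde A,\tilde B$. On the other hand, once $A,B$ are $H$-subspaces, the very definition of $H$ shows that, viewed over the new base field $H$, the stabilizer of $\vs{AB}$ is exactly $H$ itself. Since the separability hypothesis passes from $K\subset L$ to $H\subset L$ (minimal polynomials over $H$ divide those over $K$), the problem reduces to the base case: if the stabilizer of $\vs{AB}$ equals the base field, then $\dim_K \vs{AB}\ge \dim_K A+\dim_K B-1$.

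I would attack this base case by induction on $\dim_K A$. The case $\dim_K A=1$ is immediate: $\vs{AB}=\alpha B$ for some $\alpha\in L^\times$, and the inequality holds with equality. For the inductive step, pick a hyperplane $A_0\subset A$. If $\vs{A_0B}\subsetneq \vs{AB}$, the dimension drops by at least one and induction applied to $(A_0,B)$ yields the required bound. The delicate case is when $\vs{A_0 B}=\vs{AB}$ for every hyperplane $A_0\subset A$; then one must derive a contradiction with the triviality of the stabilizer.

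This last step is the main obstacle, and is precisely where the separability hypothesis has to enter. The natural tool is a well-chosen $K$-derivation of $L$, which in the separable setting can be constructed so as to detect multiplication by non-stabilizing elements; the systematic equality $\vs{A_0B}=\vs{AB}$ should then force the stabilizer to strictly contain $K$, contrary to assumption. In the inseparable case this mechanism collapses, as the counterexamples discussed in the later sections of the paper confirm, so any proof must ultimately route through a tool genuinely unavailable without separability.
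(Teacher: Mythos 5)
There is a genuine gap here, and a preliminary remark is in order: the paper does not prove this statement at all. It is quoted as an external result of Hou, Leung and Xiang \cite{xiang}, so there is no internal proof to compare against; any complete argument would have to reproduce the substance of that paper. Your opening reductions are correct and standard: $H$ is indeed a finite-dimensional $K$-subalgebra of the domain $L$, hence a field, and replacing $A,B$ by $\vs{HA},\vs{HB}$ and then changing the base field to $H$ legitimately reduces the theorem to the case where the stabilizer equals the base field (this reduction is in fact carried out, for a different purpose, in Section~\ref{lower} of the paper). But the entire content of the theorem lies in the case you defer to the end --- showing that if $\vs{A_0B}=\vs{AB}$ for every hyperplane $A_0\subset A$ then the stabilizer is nontrivial --- and for that step you offer only a heuristic. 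The known proof does not run on hyperplane induction at all; it uses a linear analogue of the Dyson/Kneser $e$-transform, $A\mapsto A+eB$, $B\mapsto B\cap e^{-1}A$, which preserves $\dim A+\dim B$ and shrinks $\vs{AB}$, together with a delicate minimal-counterexample analysis.

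The proposed tool for the missing step is moreover unavailable in exactly the case that matters. For a finite \emph{separable algebraic} extension $L/K$ one has $\mathrm{Der}_K(L)=0$ (vanishing of $K$-derivations is essentially a characterization of separability for finite extensions), so ``a well-chosen $K$-derivation of $L$'' is identically zero and cannot detect anything; separability enters the Hou--Leung--Xiang argument through a quite different mechanism (roughly, the absence of nilpotents in $F\otimes_K F$ for intermediate fields $F$, i.e.\ \'etaleness). Finally, your closing claim that inseparable counterexamples ``discussed in the later sections of the paper'' confirm the necessity of the hypothesis has the situation backwards: Section~\ref{conjectures} records Hou's conjecture that the inequality holds \emph{without} any separability assumption, verified there for $\dim_K A\le 5$, and no counterexample is known or presented. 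So the proposal establishes only the routine reductions and leaves the theorem itself unproved.
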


The separability hypothesis of the above theorem is discussed in Section~\ref{conjectures}.

\bigskip

\begin{proof}[Proof of inequality $\mu_{K,L}(r,s) \ge \kappa_{K,L}(r,s)$ of Theorem~\ref{kappaKL}] Let $A,B \subset L$ be $K$-subspaces of $L$ with $\dim_K A =r$, $\dim_K B = s$. We must prove that $\dim_K \vs{AB} \ge \kappa_{K,L}(r,s)$. As in Theorem~\ref{linear Kneser}, let $H$ be the stabilizer of the subspace $\vs{AB}$, i.e.
$$
H = \{x \in L \mid x\vs{AB} \subset \vs{AB}\}.
$$
Then of course, $H$ is a subfield of $L$ containing $K$, and we have
$$
H \vs{AB} = \vs{AB}.
$$
We shall apply Theorem~\ref{linear Kneser} to the pair $\vs{HA}$, $\vs{HB}$ of $K$-subspaces of $L$. The first observation is that this pair has the same product as the pair $A,B$:
$$
\vs{\vs{HA}\vs{HB}} = \vs{HAB} = \vs{AB}.
$$
In particular, the stabilizer of the product is still $H$. By Theorem~\ref{linear Kneser}, we obtain
$$
\dim_K \vs{AB} \ge \dim_{K}\vs{HA}+\dim_{K}\vs{HB}-\dim_{K}H.
$$
Let $g = \dim_K H$. Factoring $g$ in the above formula, we get
\begin{equation}\label{ge}
\dim_K \vs{AB} \ge (\frac{\dim_{K}\vs{HA}}{g}+\frac{\dim_{K}\vs{HB}}{g}-1)g.
\end{equation}
Now, $\vs{HA}$ is an $H$-subspace of $L$, and therefore $\dim_K \vs{HA}$ is a \textit{multiple} of $\dim_K H=g$. Moreover, the integer $(\dim_{K}\vs{HA})/{g}$ is greater than or equal to $(\dim_{K} A)/{g}={r}/{g}$. It follows that
$$
\frac{\dim_{K}\vs{HA}}{g} \ge \big\lceil \frac{r}{g} \big\rceil.
$$
The same estimate holds with $B,s$ replacing $A,r$, respectively. Plugging this information into inequality (\ref{ge}), we get
$$
\dim_K \vs{AB} \ge (\lceil {r}/{g} \rceil+\lceil {s}/{g} \rceil-1)g.
$$
Finally, given that $g$ is the dimension of an intermediate field $K \subset H \subset L$, we have
$$
(\lceil {r}/{g} \rceil+\lceil {s}/{g} \rceil-1)g \ge \kappa_{K,L}(r,s),
$$
by definition of this $\kappa$-function. It follows that $\dim_K\vs{AB} \ge \kappa_{K,L}(r,s)$. We have now shown, as claimed, that
$$
\mu_{K,L}(r,s) \ge \kappa_{K,L}(r,s)
$$
for all positive integers $r,s \le \dim_K L$.
\end{proof}

\section{Optimality}\label{optimality}
It remains to prove inequality $\mu_{K,L}(r,s) \le \kappa_{K,L}(r,s)$ of Theorem~\ref{kappaKL}. This is a construction problem. Given positive integers $r,s \le \dim_K L$, we must exhibit a pair of $K$-subspaces $A,B \subset L$ with $\dim_K A = r$, $\dim_K B = s$ and $\dim_K \vs{AB} \le \kappa_{K,L}(r,s)$. We start with a lemma on simple extensions.

\begin{lemma}\label{simple} Let $H \subset L$ be a commutative field extension, let $\alpha \in L$ and set $M=H(\alpha)$. Then, for all positive integers $r,s \le \dim_H M$, we have
$$
\mu_{H,M}(r,s) \le r+s-1.
$$
\end{lemma}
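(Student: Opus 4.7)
The plan is to construct the pair $(A, B)$ explicitly by choosing ``truncated power'' subspaces. Writing $n = \dim_H M$ (with the convention $n = \infty$ when $\alpha$ is transcendental over $H$), I take
$$
A = \mathrm{span}_H\{1, \alpha, \alpha^2, \ldots, \alpha^{r-1}\}, \qquad B = \mathrm{span}_H\{1, \alpha, \alpha^2, \ldots, \alpha^{s-1}\}.
$$
Since $r, s \le n$, the listed powers of $\alpha$ are $H$-linearly independent: when $\alpha$ is algebraic, this is because the minimal polynomial of $\alpha$ has degree exactly $n$, so any fewer than $n$ consecutive powers are independent; when $\alpha$ is transcendental, all powers are independent. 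Hence $\dim_H A = r$ and $\dim_H B = s$ as required.

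To bound $\vs{AB}$, I use the obvious inclusion
$$
AB \subseteq \mathrm{span}_H\{1, \alpha, \alpha^2, \ldots, \alpha^{r+s-2}\},
$$
so $\dim_H\vs{AB} \le r+s-1$ as soon as the $r+s-1$ powers on the right are themselves linearly independent over $H$, that is, as soon as $r+s-1 \le n$. The only potential obstacle is the remaining case $r+s-1 > n$, where high powers of $\alpha$ collapse via the minimal polynomial. Fortunately this case is harmless: here $\alpha$ is necessarily algebraic of degree $n$, and the trivial inclusion $\vs{AB} \subseteq M$ already gives $\dim_H\vs{AB} \le n < r+s-1$ for free.

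So the whole argument reduces to a one-line construction plus a case split on whether $r+s-1$ exceeds $[M:H]$. There is no serious difficulty, and the bound is attained with equality precisely when $r+s-1 \le [M:H]$; beyond that threshold it is superseded by the crude upper bound $\dim_H \vs{AB} \le \dim_H M$.
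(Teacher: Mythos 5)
Your proof is correct and follows essentially the same route as the paper: the same truncated-power subspaces $A=\langle 1,\alpha,\dots,\alpha^{r-1}\rangle$, $B=\langle 1,\alpha,\dots,\alpha^{s-1}\rangle$, with $\vs{AB}$ spanned by $\{1,\alpha,\dots,\alpha^{r+s-2}\}$. The only cosmetic difference is that you split on whether $r+s-1$ exceeds $[M:H]$, whereas the paper simply notes that a spanning set of $r+s-1$ elements bounds the dimension by $r+s-1$ in all cases.
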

\begin{proof} Assume first that $\alpha$ is transcendental over $H$. Given integers $r,s \ge 1$, let $A = \vs{1,\alpha, \ldots, \alpha^{r-1}}$ be the $H$-subspace of $M$ spanned by the first $r$ powers of $\alpha$, and similarly let $B = \vs{1,\alpha, \ldots, \alpha^{s-1}}$. Then $\dim_H A = r$, $\dim_H B = s$ and $\dim_H \vs{AB} = \dim_H \vs{1,\alpha, \ldots, \alpha^{r+s-2}}=r+s-1$.

Assume now that $\alpha$ is algebraic over $H$, of degree $[M:H]=m$. In particular, the set $\{1,\alpha, \ldots, \alpha^{m-1}\}$ is an $H$-basis of $M$. Given positive integers $r,s \le m$, let $A = \vs{1,\alpha, \ldots, \alpha^{r-1}}$ and  $B = \vs{1,\alpha, \ldots, \alpha^{s-1}}$ as above. Then $\dim_H A = r$, $\dim_H B = s$, and $\dim_H \vs{AB} \le r+s-1$ since $\vs{AB}$ is spanned by the set $\{\alpha^i\}_{0 \le i \le r+s-2}$.

In either case, our explicit pair of subspaces $A,B$ yields the desired estimate $\mu_{H,M}(r,s) \le r+s-1$.
\end{proof}

\bigskip

As a side remark, note that the above formula remains valid if either $r=0$ or $s=0$, but not if both $r=s=0$. Using the Primitive Element Theorem for separable extensions, here is a consequence that we shall need.

\begin{proposition}\label{ssp} Let $H \subset L$ be a commutative field extension which is separable or contains a transcendental element. Then, for all positive integers $r,s \le \dim_H L$, we have
$$
\mu_{H,L}(r,s) \le r+s-1.
$$
\end{proposition}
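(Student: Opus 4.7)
The plan is to reduce the proposition to the simple-extension setting already handled by Lemma~\ref{simple}. Concretely, I will exhibit an element $\alpha \in L$ such that the intermediate field $M := H(\alpha)$ satisfies $\dim_H M \ge \max(r,s)$. Once such an $\alpha$ is found, Lemma~\ref{simple} applied to $H \subset M$ produces $H$-subspaces $A,B \subset M$ with $\dim_H A = r$, $\dim_H B = s$ and $\dim_H \vs{AB} \le r+s-1$. Since $M \subset L$, these $A,B$ are also $K$-subspaces of $L$ and yield $\mu_{H,L}(r,s) \le r+s-1$ directly.

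The case where $L$ contains a transcendental element $\alpha$ over $H$ is immediate: the powers $1,\alpha,\alpha^2,\ldots$ are $H$-linearly independent, so $\dim_H H(\alpha) = \infty$ and in particular $r,s \le \dim_H H(\alpha)$. Lemma~\ref{simple} then finishes the case.

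Assume now $L/H$ is separable (and, without loss of generality, contains no transcendental element, otherwise the previous case applies). Set $n = \max(r,s)$; since $n \le \dim_H L$, I can pick $H$-linearly independent elements $v_1,\ldots,v_n \in L$. Each $v_i$ is algebraic and separable over $H$, so the finitely generated subextension $M := H(v_1,\ldots,v_n)$ is finite and separable over $H$. By the Primitive Element Theorem, $M = H(\alpha)$ for some $\alpha \in M$. Since $v_1,\ldots,v_n \in M$ are $H$-linearly independent, $\dim_H M \ge n = \max(r,s)$, and again Lemma~\ref{simple} supplies the required pair $A,B \subset M \subset L$.

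The only non-bookkeeping step is the appeal to the Primitive Element Theorem, which requires $M/H$ to be finite and separable; finiteness follows because $M$ is generated by finitely many algebraic elements, and separability of $M$ is inherited from the assumed separability of $L/H$. No real obstacle arises beyond packaging these standard facts together with Lemma~\ref{simple}.
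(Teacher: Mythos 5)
Your proof is correct and follows essentially the same route as the paper's: handle the transcendental case via Lemma~\ref{simple} directly, and in the separable case generate a finite separable subextension from $\max(r,s)$ linearly independent elements, invoke the Primitive Element Theorem, and apply Lemma~\ref{simple}. (Only a trivial slip: the subspaces $A,B$ are $H$-subspaces, not $K$-subspaces.)
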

\begin{proof} If $L$ contains a transcendental element $\alpha$, we are done by the lemma above. (Indeed, with $M=H(\alpha)$ we have $\mu_{H,L}(r,s) \le \mu_{H,M}(r,s) \le r+s-1.$) Assume now that $L$ is algebraic and separable over $H$. Given positive integers $r,s \le \dim_H L$, let $U \subset L$ be any linearly independent set of size $\max\{r,s\}$. Set $L_0 = H(U)$, the subfield of $L$ generated by $U$ over $H$. It follows from the present assumptions on $L$, that $L_0$ is a finite and separable extension of $H$, with $[L_0:H]=m \ge \max\{r,s\}$. By the Primitive Element Theorem, there exists an element $\alpha \in L_0$ such that $L_0 = H(\alpha)$. We now conclude with Lemma~\ref{simple}.
\end{proof}

\bigskip

The above result is in fact valid without any separability hypothesis, as shown in Section~\ref{removing} with a little longer argument. However, the present version is sufficient to help us conclude the proof of Theorem~\ref{kappaKL}.

\bigskip

\noindent
\begin{proof}[Proof of inequality $\mu_{K,L}(r,s) \le \kappa_{K,L}(r,s)$] Let $r,s$ be positive integers not exceeding $[L:K]$. Let $h_0=[H:K]$ be the $K$-dimension of a finite-dimensional intermediate field extension $K \subset H \subset L$ for which $\kappa_{K,L}(r,s)$ attains its value, i.e. such that
$$
\kappa_{K,L}(r,s) = (\lceil r/h_0\rceil + \lceil s/h_0\rceil -1)h_0.
$$
(Note that such an $h_0$ exists and cannot exceed $r+s-1$ since, using $h=1$ in the definition of $\kappa_{K,L}$, we have $\kappa_{K,L}(r,s) \le r+s-1$.) Set $r_0 = \lceil r/h_0\rceil$, $s_0 = \lceil s/h_0\rceil$. Of course $\lceil r/h_0\rceil, \lceil s/h_0\rceil \le [L:K]/h_0 = [L:H]$. From the hypotheses on the extension $L$ over $K$, it follows that $L$, as an extension over $H$, is either separable or else contains a transcendental element. By Proposition~\ref{ssp}, we have $\mu_{H,L}(r_0,s_0) \le r_0+s_0-1$. Thus there exist $H$-subspaces $A_0,B_0 \subset L$ such that
\begin{eqnarray*}
\dim_H A_0 & = & r_0, \\
\dim_H B_0 & = & s_0, \\
\dim_H \vs{A_0B_0} & \le & r_0+s_0-1.
\end{eqnarray*}
Now, viewed as $K$-subspaces of $L$, their dimensions are multiplied by $h_0$. Thus, we have
\begin{eqnarray*}
\dim_K A_0 & = & r_0 h_0, \\
\dim_K B_0 & = & s_0 h_0, \\
\dim_K \vs{A_0B_0} & \le & (r_0+s_0-1)h_0 \; = \; \kappa_{K,L}(r,s).
\end{eqnarray*}
Therefore $\mu_{K,L}(r_0 h_0,s_0 h_0) \le \kappa_{K,L}(r,s)$. Now $r \le r_0 h_0$, $s \le s_0 h_0$, and clearly the function $\mu_{K,L}(r,s)$ is nondecreasing in each variable. It follows that 
$$\mu_{K,L}(r,s) \le \mu_{K,L}(r_0 h_0,s_0 h_0) \le \kappa_{K,L}(r,s),
$$ 
as claimed. The proof of Theorem~\ref{kappaKL} is now complete.
\end{proof}

\section{Examples}\label{examples} We now give three examples illustrating Theorem~\ref{kappaKL}.

\begin{example}[Transcendental extensions]\label{trans} Assume that $L$ is a purely transcendental extension of $K$. In that case, the unique finite-dimensional intermediate extension $K \subset H \subset L$ is $H=K$ itself. It follows that
$\kappa_{K,L}(r,s) = r+s-1$ and thus, by Theorem~\ref{kappaKL}, we have
$$
\mu_{K,L}(r,s) = r+s-1
$$
for all positive integers $r,s$. (See also Theorem 6.3 and the remark following it in \cite{linear}.) 
\end{example}

\begin{example}[A linear version of the Cauchy-Davenport Theorem]\label{cauchy} Let $K \subset L$ be a commutative field extension of prime degree $[L:K] = p$. Asssume that char($K$) is distinct from $p$. Then, for all $1 \le r,s \le p$, we have
\begin{equation}\label{degree p}
\mu_{K,L}(r,s) = \min\{r+s-1,p\}.
\end{equation}
(Compare with the original Cauchy-Davenport Theorem in Section~\ref{link}.) Indeed, by our assumption char$(K) \not= p$, the extension is separable. Thus Theorem~\ref{kappaKL} applies and gives $\mu_{K,L}(r,s) = \kappa_{K,L}(r,s)$. Finally, since the only intermediate fields $K \subset H \subset L$ are $H=K$ and $H=L$, we have $\kappa_{K,L}(r,s) = \min\{r+s-1,p\}$ by definition of this function.
\end{example}
Actually, formula~(\ref{degree p}) also holds if char$(K)=p$, as we shall show in a future publication.

\begin{example}[An extension of degree 16]\label{degree 16} Consider the extension $\Q \subset \Q(\sqrt[16]{2})$. This is a separable extension of degree 16, obviously containing intermediate extensions of degree 2, 4 and 8. It follows that, for all $1 \le r,s \le 16$, we have
$$
\mu_{\Q,\Q(\sqrt[16]{2})}(r,s) = \kappa_{\Q,\Q(\sqrt[16]{2})}(r,s) = \min_{h|16}(\lceil r/h\rceil + \lceil s/h\rceil -1)h. 
$$
This is exactly the classical Hopf-Stiefel function $r \circ s$ \cite{hopfstiefel}. We now tabulate this function in order to sense its quite complicated behavior. The value of $r \circ s$ is the coefficient in row $\; r$ and column $\; s$ of the matrix below:
$$
\left(
\begin{array}{cccccccccccccccc}
 1 & 2 & 3 & 4 & 5 & 6 & 7 & 8 & 9 & 10 & 11 & 12 & 13 & 14 & 15 & 16 \\
 2 & 2 & 4 & 4 & 6 & 6 & 8 & 8 & 10 & 10 & 12 & 12 & 14 & 14 & 16 & 16 \\
 3 & 4 & 4 & 4 & 7 & 8 & 8 & 8 & 11 & 12 & 12 & 12 & 15 & 16 & 16 & 16 \\
 4 & 4 & 4 & 4 & 8 & 8 & 8 & 8 & 12 & 12 & 12 & 12 & 16 & 16 & 16 & 16 \\
 5 & 6 & 7 & 8 & 8 & 8 & 8 & 8 & 13 & 14 & 15 & 16 & 16 & 16 & 16 & 16 \\
 6 & 6 & 8 & 8 & 8 & 8 & 8 & 8 & 14 & 14 & 16 & 16 & 16 & 16 & 16 & 16 \\
 7 & 8 & 8 & 8 & 8 & 8 & 8 & 8 & 15 & 16 & 16 & 16 & 16 & 16 & 16 & 16 \\
 8 & 8 & 8 & 8 & 8 & 8 & 8 & 8 & 16 & 16 & 16 & 16 & 16 & 16 & 16 & 16 \\
 9 & 10 & 11 & 12 & 13 & 14 & 15 & 16 & 16 & 16 & 16 & 16 & 16 & 16 & 16 & 16 \\
 10 & 10 & 12 & 12 & 14 & 14 & 16 & 16 & 16 & 16 & 16 & 16 & 16 & 16 & 16 & 16 \\
 11 & 12 & 12 & 12 & 15 & 16 & 16 & 16 & 16 & 16 & 16 & 16 & 16 & 16 & 16 & 16 \\
 12 & 12 & 12 & 12 & 16 & 16 & 16 & 16 & 16 & 16 & 16 & 16 & 16 & 16 & 16 & 16 \\
 13 & 14 & 15 & 16 & 16 & 16 & 16 & 16 & 16 & 16 & 16 & 16 & 16 & 16 & 16 & 16 \\
 14 & 14 & 16 & 16 & 16 & 16 & 16 & 16 & 16 & 16 & 16 & 16 & 16 & 16 & 16 & 16 \\
 15 & 16 & 16 & 16 & 16 & 16 & 16 & 16 & 16 & 16 & 16 & 16 & 16 & 16 & 16 & 16 \\
 16 & 16 & 16 & 16 & 16 & 16 & 16 & 16 & 16 & 16 & 16 & 16 & 16 & 16 & 16 & 16
\end{array}
\right)
$$
For example, one finds $11 \circ 4 = 12$ and $11 \circ 5 = 15$. The fact that the lower antidiagonal part of the matrix is constant and equal to 16 is part of the following more general phenomenon. 
\end{example}

\begin{remark}
If $[L:K]=n$, then $\kappa_{K,L}(r,s) = n$ whenever $r,s \le n$ and $\;r+s \ge n+1$. 
\end{remark}
Indeed, denote $f_h(r,s) = (\lceil r/h \rceil+\lceil s/h \rceil -1)h$. Then $\kappa_{K,L}(r,s)=\min_h f_h(r,s)$, where $h$ runs over a certain set of divisors of $n$, namely the $K$-degrees of intermediate extensions. If $r+s \ge n+1$, then $f_h(r,s) \ge n+1-h$. But since $f_h(r,s)$ is a multiple of $h$, it follows that $f_h(r,s) \ge n+h-h=n$. Finally, with $h=n$ we get $f_n(r,s) = n$, and the formula follows.

\section{Finite Galois extensions}\label{galois}

In this section we consider the case of a finite Galois extension $K \subset L$ with Galois group $G$, and attempt to compare the function $\kappa_G$ from group theory to its linear version $\kappa_{K,L}$. 

By basic Galois theory, there is a bijection between intermediate extensions $K \subset H \subset L$ and subgroups of $G=Gal(L/K)$, namely $H \mapsto Gal(L/H)$. The cardinality of the subgroup of $G$ corresponding to $H$ is given by the formula
$$
|Gal(L/H)|=[L:H]=[L:K]/[H:K].
$$
Recall that $\kappa_G(r,s)$ is defined, in the case at hand, by minimizing the expression $$(\lceil r/h\rceil + \lceil s/h\rceil -1)h$$ over all subgroup cardinalities $h=|Gal(L/H)|=[L:H]$. However, in the definition of $\kappa_{K,L}(r,s)$, the minimum is rather taken over the numbers $h = [H:K]$. Thus, the functions $\kappa_{K,L}(r,s)$ and $\kappa_G(r,s)$ cannot be directly compared in general, except in the particular case where all divisors of $|G|$ happen to be subgroup cardinalities; this occurs for instance if $G$ is abelian or a $p$-group. This observation yields the following consequences of Theorem~\ref{kappaKL}.

\begin{corollary}\label{Galois} Let $K \subset L$ be a Galois extension with finite Galois group $G$ of order $n$. Assume that every divisor $d$ of $n$ is a subgroup cardinality. Then, for all positive integers $r,s \le n=[L:K]$, we have
$$
\mu_{K,L}(r,s) = \kappa_G(r,s) = \min_{d|n}(\lceil r/d \rceil + \lceil s/d \rceil -1 )d.
$$
\end{corollary}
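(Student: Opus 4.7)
The plan is to derive the corollary directly from Theorem~\ref{kappaKL}, the only substantive content being a short dictionary between the two index sets appearing in the definitions of $\kappa_{K,L}$ and $\kappa_G$.

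First, since every Galois extension is separable, Theorem~\ref{kappaKL} immediately applies and gives $\mu_{K,L}(r,s) = \kappa_{K,L}(r,s)$, where the minimum in $\kappa_{K,L}$ is taken over the set $\mathcal{D}_1$ of degrees $h = [H:K]$ of intermediate fields $K \subset H \subset L$.

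Next I would compare $\mathcal{D}_1$ to the analogous set $\mathcal{D}_2$ of subgroup orders appearing in $\kappa_G$. By the Galois correspondence recalled at the beginning of Section~\ref{galois}, intermediate fields correspond bijectively to subgroups of $G$ via $H \mapsto Gal(L/H)$, and the orders of these subgroups are $[L:H] = n/[H:K]$. Thus $\mathcal{D}_1$ and $\mathcal{D}_2$ are exchanged by the involution $d \mapsto n/d$ on the divisor lattice of $n$. The hypothesis of the corollary is precisely that $\mathcal{D}_2$ is the full set of positive divisors of $n$; since this set is stable under $d \mapsto n/d$, the same hypothesis forces $\mathcal{D}_1$ to consist of all divisors of $n$ as well.

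With both minima thus running over the same set, we obtain $\kappa_{K,L}(r,s) = \kappa_G(r,s) = \min_{d \mid n}(\lceil r/d \rceil + \lceil s/d \rceil - 1)d$, which combined with the first step completes the proof. I do not expect any genuine obstacle beyond this divisor-lattice bookkeeping; the main labor has already been absorbed into Theorem~\ref{kappaKL}.
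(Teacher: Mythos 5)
Your proposal is correct and follows exactly the paper's own route: the paper's discussion preceding the corollary makes the same observation that the Galois correspondence sends the set of intermediate-field degrees $[H:K]$ to the set of subgroup orders $[L:H]=n/[H:K]$, so that the hypothesis (all divisors of $n$ are subgroup cardinalities) forces both index sets to be the full divisor set of $n$, whence $\kappa_{K,L}=\kappa_G$ and Theorem~\ref{kappaKL} (applicable since Galois extensions are separable) finishes the argument. No gaps.
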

Assuming further that $G$ is abelian, and using Theorem~\ref{abelian}, we get an equality on the level of $\mu$-functions.

\begin{corollary}\label{abelian Galois} Let $K \subset L$ be a Galois extension with finite abelian Galois group $G$ of order $n$. Then, for all positive integers $r,s \le n=[L:K]$, we have
\begin{equation}\label{mu}
\mu_{K,L}(r,s) = \mu_G(r,s).
\end{equation}
\end{corollary}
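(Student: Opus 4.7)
The plan is to chain together Corollary~\ref{Galois} with Theorem~\ref{abelian}. Both identify $\mu$-functions with the same combinatorial object $\kappa_G(r,s)$, so the equality $\mu_{K,L}(r,s) = \mu_G(r,s)$ will drop out as soon as we verify that Corollary~\ref{Galois} is applicable in the abelian case.

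First I would check the hypothesis of Corollary~\ref{Galois}, namely that every divisor $d$ of $n = |G|$ arises as the cardinality of a subgroup of $G$. This is the classical fact that a finite abelian group has subgroups of every order dividing its order. One can deduce it quickly from the primary decomposition $G \cong \prod_p G_p$: if $d = \prod_p p^{a_p}$ with $p^{a_p}$ dividing $|G_p|$, then it suffices to exhibit, for each $p$, a subgroup of $G_p$ of order $p^{a_p}$, and this follows from the structure of finite abelian $p$-groups (pick, in each cyclic summand $\Z/p^{e_i}\Z$, the unique subgroup of a suitable $p$-power order, and adjust the exponents so that the product has the desired order). Thus the hypothesis of Corollary~\ref{Galois} is met.

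Next I would invoke Corollary~\ref{Galois} to conclude
$$
\mu_{K,L}(r,s) \;=\; \kappa_G(r,s)
$$
for all $1 \le r,s \le n$. On the other hand, Theorem~\ref{abelian}, applied to the finite abelian group $G$, yields
$$
\mu_G(r,s) \;=\; \kappa_G(r,s).
$$
Combining these two identities gives the desired equality (\ref{mu}).

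I do not expect any serious obstacle: the corollary is essentially a bookkeeping statement that hinges only on the divisibility lemma for finite abelian groups. The only subtle point is the mismatch, discussed just before Corollary~\ref{Galois}, between the set of intermediate-field degrees $[H:K]$ (used in the definition of $\kappa_{K,L}$) and the set of subgroup orders $|\mathrm{Gal}(L/H)|$ (used in the definition of $\kappa_G$). These two sets coincide precisely when every divisor of $n$ is a subgroup cardinality, and the finite abelian case is exactly one of the situations — already flagged in the text — where this coincidence holds.
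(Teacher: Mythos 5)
Your proposal is correct and follows exactly the route the paper takes: verify that a finite abelian group has subgroups of every order dividing $n$ so that Corollary~\ref{Galois} applies, then combine the resulting identity $\mu_{K,L}(r,s)=\kappa_G(r,s)$ with Theorem~\ref{abelian}'s identity $\mu_G(r,s)=\kappa_G(r,s)$. The paper states this more tersely (the divisibility fact is flagged in the paragraph preceding Corollary~\ref{Galois}), but the argument is the same.
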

However, note that equality~(\ref{mu}) does not hold in general if $G$ is nonabelian, even if all divisors of $|G|$ are subgroup cardinalities. For instance, for the nonabelian group $G = \Z/7\Z \rtimes \Z/3Z$ of order 21, it is known that $\mu_G(5,9)=\kappa_G(5,9)+1 = 13$; this provides, by Corollary~\ref{Galois}, a counterexample to equality~(\ref{mu}).

\section{The small products property}\label{removing}

In this section we show that Proposition~\ref{ssp} is valid in an arbitrary commutative field extension $H \subset L$, not necessarily separable. Indeed, we shall prove that, for all positive integers $r,s \le [L:H]$, there exist $H$-subspaces $A,B$ of $L$ with $\dim_H A =r$, $\dim_H B =s$ and $\dim_H \vs{AB} \le r+s-1$. We might call this the \textit{small products property}, in analogy with the small sumsets property for groups.

\begin{proposition}\label{ssp general} Let $H \subset L$ be a commutative field extension. Then, for all positive integers $r,s \le \dim_H L$, we have
$$
\mu_{H,L}(r,s) \le r+s-1.
$$
\end{proposition}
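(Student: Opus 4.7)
The plan is to induct on a tower of simple extensions. First reduce to the finite-dimensional algebraic case: given $r, s \le \dim_H L$, either $L$ contains a transcendental element over $H$ (whereupon Proposition~\ref{ssp} applies), or $L/H$ is algebraic; in the latter case, picking any $\max\{r, s\}$ elements of $L$ that are linearly independent over $H$ and passing to the finite subfield they generate reduces to the case where $L/H$ is finite. Since every finite algebraic extension admits a tower $H = L_0 \subset L_1 \subset \cdots \subset L_k = L$ of simple extensions, it suffices to establish the following \emph{inductive lemma}: if $H \subset M$ satisfies $\mu_{H, M}(r', s') \le r' + s' - 1$ for all $1 \le r', s' \le [M:H]$, then so does $H \subset L$ where $L = M(\alpha)$ for any $\alpha \in L$ algebraic over $M$. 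The base case $M = H$ is immediate.

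To prove the inductive lemma, put $n = [M:H]$ and $d = [L:M]$, and fix $1 \le r, s \le dn$. If $r + s > dn$, the bound $\dim_H \langle AB \rangle \le dn \le r + s - 1$ holds for any $A, B \subset L$ of dimensions $r, s$. Otherwise write $r = q_r n + r_1$ and $s = q_s n + s_1$ with $0 \le r_1, s_1 < n$. By the inductive hypothesis applied to $M/H$, choose $H$-subspaces $A_1', B_1' \subset M$ of respective $H$-dimensions $r_1, s_1$ with $\dim_H \langle A_1' B_1' \rangle \le r_1 + s_1 - 1$ (omit these partial pieces in the construction when $r_1$ or $s_1$ is zero). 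Define
\begin{equation*}
A \;=\; \bigoplus_{i=0}^{q_r - 1} M \alpha^i \;\oplus\; A_1'\, \alpha^{q_r}, \qquad
B \;=\; \bigoplus_{j=0}^{q_s - 1} M \alpha^j \;\oplus\; B_1'\, \alpha^{q_s},
\end{equation*}
so that $\dim_H A = q_r n + r_1 = r$ and $\dim_H B = q_s n + s_1 = s$. The constraint $r + s \le dn$ forces $q_r + q_s \le d$, with strict inequality except in the degenerate case $r_1 = s_1 = 0$ and $q_r + q_s = d$ (where the partial-layer pieces vanish and the argument below simplifies), so the powers of $\alpha$ appearing in $\langle AB \rangle$ are $M$-linearly independent.

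Analyze $\langle AB \rangle$ layer by layer in the $\alpha$-grading: the contribution to $M\alpha^k$ is $\sum_{i+j=k} A_i B_j$, where we set $A_i = M$ for $i < q_r$, $A_{q_r} = A_1'$, and symmetrically for $B$. For each interior index $k < q_r + q_s$, the range constraint guarantees an index pair $(i, j)$ with $i + j = k$ and either $i < q_r$ (so $A_i = M$) or $j < q_s$ (so $B_j = M$); since $M \cdot V = M$ for any nonzero $H$-subspace $V \subset M$, the contribution to $M\alpha^k$ is the full layer of $H$-dimension $n$. At the top index $k = q_r + q_s$, only the pair $(q_r, q_s)$ contributes, giving $A_1' B_1' \alpha^{q_r + q_s}$ of $H$-dimension at most $r_1 + s_1 - 1$ by the inductive hypothesis. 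Summing the layer contributions,
\begin{equation*}
\dim_H \langle AB \rangle \;\le\; n(q_r + q_s) + (r_1 + s_1 - 1) \;=\; r + s - 1,
\end{equation*}
which completes the induction.

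The main obstacle is organizational rather than conceptual: one must verify that every interior layer in the $\alpha$-grading receives a full $M$-layer contribution, and handle the boundary cases in which $r_1$ or $s_1$ vanishes. The argument invokes no separability hypothesis and thereby sidesteps entirely the inseparable difficulty that limits the reach of Proposition~\ref{ssp}.
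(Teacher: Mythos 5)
Your proof is correct and follows essentially the same route as the paper's: both reduce to a finite algebraic extension, induct through simple extensions (the paper phrases this as a maximality argument on intermediate fields $M$, you as an explicit tower), and use the identical construction $A = M\cdot\{1,\alpha,\ldots,\alpha^{q-1}\}\oplus A_0\alpha^{q}$ with the layer-by-layer dimension count. The only cosmetic difference is your use of standard Euclidean division with remainder in $[0,n)$ versus the paper's shifted division with remainder in $[1,n]$, which forces you to treat the vanishing-remainder cases separately where the paper does not.
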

\begin{proof} As in the proof of Proposition~\ref{ssp}, we are done if $L$ contains a transcendental element over $H$. Assume now that $L$ is algebraic over $H$. If $[L:H]$ is infinite, then $L$ contains intermediary extensions $H \subset L' \subset L$ with $[L':H]$ finite but arbitrarily large. (Indeed, take $L'=H(u_1,\ldots,u_n)$ for any finite choice of $u_1,\ldots,u_n \in L$.) Hence we may further assume that $[L:H]$ is finite. Let $H \subset M \subset L$ be an intermediate extension for which the statement of the proposition is true, namely satisfying
\begin{equation}\label{eq: ssp}
\mu_{H,M}(r_0,s_0) \le r_0+s_0-1
\end{equation}
for all $1 \le r_0,s_0 \le [M:H]$. Such extensions exist, for instance $M=H$. We may further assume that $M$ is maximal for this small products property. If $M=L$ we are done. If not, let $\alpha \in L \setminus M,$ say of degree $d$ over $M$. For the record, the set $\{1,\alpha,\ldots,\alpha^{d-1}\}$ is an $M$-basis of $M(\alpha)$. We shall show that the statement of the proposition still holds for the extension $H \subset M(\alpha)$, a contradiction to the maximality of $M$.

Let $r,s \le [M(\alpha):H] = [M:H]d$. Performing a slightly modified euclidean division by $[M:H]$, we may write
\begin{eqnarray*}
r & = & q_1 [M:H] + r_0,\\
s & = & q_2 [M:H] + s_0,
\end{eqnarray*}
with remainders $1 \le r_0, s_0 \le [M:H]$ and quotients $q_1,q_2 \le d-1$. 

Since $\mu_{H,M}(r_0,s_0) \le r_0+s_0-1$, we may choose $H$-subspaces $A_0,B_0 \subset M$ such that
$$
\begin{array}{lll}
\dim_H A_0 & = & r_0,\\
\dim_H B_0 & = & s_0,\\
\dim_H \vs{A_0B_0} & \le & r_0+s_0-1.
\end{array}
$$
We may assume $q_1+q_2 \ge 1$, for otherwise $r=r_0,s=s_0$ and we are done in this case by assumption on $M$. We now define
\begin{eqnarray*}
A & = & M\cdot\{1,\alpha, \ldots, \alpha^{q_1-1}\} \oplus A_0\cdot \alpha^{q_1},\\
B & = & M\cdot\{1,\alpha, \ldots, \alpha^{q_2-1}\} \oplus B_0\cdot \alpha^{q_2},
\end{eqnarray*}
provided $q_1, q_2 \ge 1$. If $q_1=0$ or $q_2=0$, we simply set $A=A_0$ or $B=B_0$, respectively. 
In all cases, viewing $A,B$ as vector spaces over $H$, we have 
\begin{eqnarray*}
\dim_H A = q_1[M:H]+r_0 = r,\\
\dim_H B = q_2[M:H]+s_0 = s.
\end{eqnarray*}
(Recall that $1,\alpha,\ldots,\alpha^{d-1}$ are linearly independent over $M$, that $q_1,q_2 \le d-1$ and that $A_0,B_0 \subset M$.) Now, taking the product of $A$ and $B$, it is plain that we get
$$
\vs{AB} \subset M\cdot\{1,\alpha, \ldots, \alpha^{q_1+q_2-1}\} \oplus \vs{A_0B_0}\cdot \alpha^{q_1+q_2}.
$$
It follows that
$$
\dim_H\vs{AB} \le (q_1+q_2)[M:H]+(r_0+s_0-1) = r+s-1,
$$
and the proof of the proposition is complete.
\end{proof}

\section{Two conjectures}\label{conjectures}

In Theorems~\ref{kappaKL} and \ref{linear Kneser}, the extension $K \subset L$ is assumed to have all its algebraic elements separable. Are these results still valid without this hypothesis? The answer for Theorem~\ref{linear Kneser} is conjectured in \cite{hou} to be positive.
\begin{conjecture}(X.D.Hou)\label{conj: hou} Let $K\subset L$ be a commutative field
extension, and let $A,B\subset L$ be nonzero finite-dimensional $K$-subspaces of $L$. Let $H$ be the stabilizer of $\langle AB\rangle.$ 
Then 
\begin{equation*}
\dim _{K}\langle AB\rangle \geq \dim _{K}A+\dim _{K}B-\dim _{K}H.
\end{equation*}
\end{conjecture}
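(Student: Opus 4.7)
The plan is to reduce Hou's conjecture to a stabilizer-trivial ``atomic'' statement, and then to attack the purely inseparable case separately. First, since $A$ and $B$ are finite-dimensional, I would replace $L$ by a finitely generated subextension of $K$ containing $A\cup B$ and assume $[L:K]<\infty$. Following the normalization used in the lower-bound proof of Theorem~\ref{kappaKL}, I would then replace $A,B$ by $\langle HA\rangle, \langle HB\rangle$: the product $\langle AB\rangle$ and its stabilizer $H$ are unchanged, and, since $\dim_K\langle HA\rangle \ge \dim_K A$ and similarly for $B$, it suffices to prove the conjecture under the stronger hypothesis that $A,B$ are $H$-subspaces of $L$. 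Since $H$ acts faithfully on the finite-dimensional space $\langle AB\rangle$, the degree $[H:K]$ is finite, and dividing through by $[H:K]$ reduces the desired inequality to the atomic statement: in the extension $H\subset L$ with $\mathrm{stab}(\langle AB\rangle)=H$,
\begin{equation*}
\dim_H\langle AB\rangle \;\ge\; \dim_H A + \dim_H B - 1.
\end{equation*}
Relabeling $H$ as $K$, we may therefore assume throughout that the stabilizer of $\langle AB\rangle$ is $K$ itself, and aim for the Cauchy--Davenport-type bound $\dim_K\langle AB\rangle \ge \dim_K A + \dim_K B - 1$.

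Let $K_s$ denote the separable closure of $K$ in $L$. If $L=K_s$, then $L/K$ is separable and Theorem~\ref{TH_HLX} gives the atomic inequality immediately. Otherwise $L/K_s$ is a nontrivial purely inseparable extension of exponent $e\ge 1$, so $L^{p^e}\subset K_s$. In this case my plan is a Frobenius-descent argument: the iterated Frobenius $F^e\colon x\mapsto x^{p^e}$ is an injective ring homomorphism sending $A,B$ into $K_s$, and for any $K$-subspace $V$ of $L$ it produces a $K^{p^e}$-subspace $F^e(V)$ of the same dimension. One may then hope to apply the separable case of the conjecture, extended by induction on $[L:K]$, to the pair $F^e(A), F^e(B)$ inside the extension $K^{p^e}\subset K_s$, and to transport the resulting inequality back to $A,B$ via Frobenius.

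The main obstacle I expect is precisely this transport step, and it is fundamental. For the separable case of the conjecture to apply over $K^{p^e}\subset K_s$, the latter extension must have all algebraic elements separable over $K^{p^e}$; however $K/K^{p^e}$ is itself purely inseparable whenever $K$ is imperfect, so this hypothesis fails in general. One may attempt to bypass this by enlarging $K$ to its perfect closure $K^{\mathrm{perf}}$, but then one must work inside $K^{\mathrm{perf}}\otimes_K L$, which is only a ring and not a field in general, so zero-divisors would have to be controlled. Alternatively, one may forgo Frobenius and attack the purely inseparable case intrinsically, for instance via the Jacobson correspondence between purely inseparable modular extensions and restricted Lie algebras of $K$-derivations, or by induction on the inseparability exponent using the tower of intermediate fields $K(L^{p^i})$, or by a direct combinatorial argument on $p$-bases analogous to the prime-degree statement that the authors promise to establish in a sequel to this paper. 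Behind all of these attempts lies the conceptual fact that the proof of Theorem~\ref{TH_HLX} uses separability through the Primitive Element Theorem and through the existence of nontrivial $K$-derivations of $L$, neither of which has a direct analogue in the purely inseparable setting; a successful attack on Hou's conjecture will therefore likely require a genuinely new ingredient in that case.
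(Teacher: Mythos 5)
You were asked to prove a statement that the paper itself does not prove: it is recorded as an open conjecture attributed to X.~D.~Hou, and the authors only note that the separable case is Theorem~\ref{TH_HLX} (Hou--Leung--Xiang) and that the case $\dim_K A\le 5$ is established in \cite{hou}. So there is no proof in the paper to compare yours against, and your own text, quite honestly, stops short of being a proof. Your normalization steps are correct and standard: replacing $A,B$ by $\langle HA\rangle,\langle HB\rangle$ leaves $\langle AB\rangle$ and its stabilizer unchanged while only increasing the dimensions (this is exactly the manoeuvre the authors use in Section~\ref{lower}); $H$ embeds $K$-linearly into $\mathrm{End}_K(\langle AB\rangle)$, so $[H:K]<\infty$; and dividing by $[H:K]$ reduces the conjecture to the ``atomic'' case where the stabilizer is the base field and one wants $\dim_K\langle AB\rangle\ge\dim_K A+\dim_K B-1$. (Two small cautions: passing to a subextension $L'\supset A\cup B$ changes the stabilizer to $H\cap L'$, which only strengthens the target inequality, so that step is sound but should be justified; and $K(A\cup B)$ is finitely generated but need not be of finite degree if $A$ or $B$ contains transcendental elements, so the assumption $[L:K]<\infty$ is not automatic.)

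The genuine gap is the one you identify yourself: the purely inseparable case. The Frobenius $F^e$ does carry $A,B$ into $K_s$ with dimensions preserved, but over the base $K^{p^e}$, and $K_s/K^{p^e}$ fails the separability hypothesis whenever $K$ is imperfect, so Theorem~\ref{TH_HLX} cannot be invoked there; none of the alternative routes you list (perfect closure, derivations and the Jacobson correspondence, induction on the inseparability exponent) is actually carried out, and each runs into the same structural obstruction. What you have produced is therefore a correct reduction plus a research plan, not a proof --- which is consistent with the status of the statement: it is precisely Conjecture~\ref{conj: hou}, open beyond $\dim_K A\le 5$, and the paper's Theorem~\ref{kappaKL} deliberately retains the separability hypothesis for exactly this reason.
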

It is shown in \cite{hou} that the statement of the conjecture holds for $\dim_K A \le 5$.

\bigskip

It remains to decide whether the separability hypothesis in Theorem~\ref{kappaKL} can be removed. We conjecture that this is the case.
\begin{conjecture}\label{conj: kappaKL}
Let $K \subset L$ be a commutative field extension. Then, for all positive integers $r,s \le \dim_K L$, one should have
$$
\mu_{K,L}(r,s) = \kappa_{K,L}(r,s).
$$
\end{conjecture}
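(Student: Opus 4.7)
The plan is to separate the equality $\mu_{K,L}(r,s) = \kappa_{K,L}(r,s)$ into two inequalities and handle them by mirroring the proof of Theorem~\ref{kappaKL}. The upper bound $\mu_{K,L}(r,s) \le \kappa_{K,L}(r,s)$ is in fact already at hand without any separability hypothesis: inspecting the construction in Section~\ref{optimality}, the only place where separability is invoked is in the appeal to Proposition~\ref{ssp} applied to the intermediate extension $H\subset L$. Proposition~\ref{ssp general} of Section~\ref{removing} removes that hypothesis, giving the small products property $\mu_{H,L}(r_0,s_0)\le r_0+s_0-1$ for an arbitrary commutative field extension. Substituting Proposition~\ref{ssp general} for Proposition~\ref{ssp} in the proof of Section~\ref{optimality} yields the upper bound in full generality.

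For the lower bound $\mu_{K,L}(r,s) \ge \kappa_{K,L}(r,s)$, the argument of Section~\ref{lower} is structurally independent of separability once its key input is granted. Given $A,B\subset L$ with $\dim_K A=r$, $\dim_K B=s$, one sets $H$ to be the stabilizer of $\vs{AB}$, replaces $A,B$ by $\vs{HA},\vs{HB}$ (which have the same product), and then needs the Kneser-type inequality $\dim_K\vs{AB}\ge \dim_K\vs{HA}+\dim_K\vs{HB}-\dim_K H$. Because $\vs{HA}$ and $\vs{HB}$ are $H$-subspaces of $L$, their $K$-dimensions are multiples of $g=\dim_K H$ bounded below by $r$ and $s$ respectively, and the ceiling-function bookkeeping that completes the argument then carries over word for word. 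The whole lower bound therefore reduces to establishing the Kneser-type inequality without separability, i.e., to Conjecture~\ref{conj: hou}.

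This pinpoints the main obstacle: Conjecture~\ref{conj: kappaKL} is, for all practical purposes, equivalent to Conjecture~\ref{conj: hou}. The Hou--Leung--Xiang proof of Theorem~\ref{linear Kneser} uses the trace pairing in an essential way and thus breaks down precisely when inseparability is present; Primitive-Element-style reductions are similarly unavailable. Two natural attack lines suggest themselves. One is to base-change from $L$ to the separable closure of $K$ in $L$, or to a perfect hull, and to transfer the resulting dimension estimates back to $K$; the difficulty here is that tensor products such as $L\otimes_K K^{\mathrm{sep}}$ need not be fields, so the stabilizer of $\vs{AB}$ can behave erratically under the extension. A second route is to extend Hou's inductive method from~\cite{hou}, which currently covers $\dim_K A\le 5$, by stratifying a purely inseparable step into degree-$p$ layers with $\alpha^p\in K$ and analysing how $\alpha\cdot A$ meets $A$; the nilpotent multiplication phenomena that arise have no counterpart in the separable setting and appear to constitute the essential combinatorial difficulty.
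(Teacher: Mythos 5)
Your analysis coincides with the paper's own treatment of this statement: the paper likewise does not prove the conjecture but observes that the upper bound $\mu_{K,L}(r,s)\le\kappa_{K,L}(r,s)$ holds unconditionally once Proposition~\ref{ssp general} replaces Proposition~\ref{ssp}, and that the lower bound reduces to the Kneser-type inequality of Conjecture~\ref{conj: hou}, giving the same conditional derivation (and the same consequence that the case $r\le 5$ is settled by Hou's partial result). The only caveat is your phrase that the two conjectures are ``for all practical purposes, equivalent'': both you and the paper establish only the implication from Conjecture~\ref{conj: hou} to Conjecture~\ref{conj: kappaKL}, not the converse.
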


This conjecture in fact follows from Conjecture~\ref{conj: hou}. Indeed, our proof of Theorem~\ref{kappaKL} relies on both Theorem~\ref{linear Kneser} and Proposition~\ref{ssp}. Removing the separability hypotheses in these two results yields Conjecture~\ref{conj: hou} and Proposition~\ref{ssp general}, respectively. With the latter statements, our proof of Theorem~\ref{kappaKL} becomes a derivation of Conjecture~\ref{conj: kappaKL} from Conjecture~\ref{conj: hou}. In particular, by the above-mentioned result in \cite{hou}, Conjecture~\ref{conj: kappaKL} holds at least for $r \le 5$.

Of course, by Theorem~\ref{kappaKL}, Conjecture~\ref{conj: kappaKL} holds for all separable extensions, and in particular in characteristic 0.

\vspace{4mm}
{\footnotesize {\bf Acknowledgment:} During the preparation of this paper, the first
author has partially benefited from a research contract with the Fonds National
Suisse pour la Recherche Scientifique.}

\vspace{6mm}

{ \centerline{\large Authors' Addresses}

\vspace{4mm}

\noindent
{\sc Shalom Eliahou, C\'edric Lecouvey},\\
LMPA Joseph Liouville, FR CNRS 2956\\
Universit\'{e} du Littoral C\^{o}te d'Opale\\
50 rue F. Buisson, B.P. 699\\
F-62228 Calais cedex, France\\
e-mail: eliahou@lmpa.univ-littoral.fr, lecouvey@lmpa.univ-littoral.fr

\vspace{3mm}

\noindent
{\sc Michel Kervaire},\\
D\'epartement de Math\'ematiques\\ 
Universit\'e de Gen\`{e}ve\\
2-4, rue du Li\`{e}vre, Case postale 64\\ 
CH-1211 Gen\`{e}ve 24, Suisse\\
e-mail: Michel.Kervaire@math.unige.ch

}


\begin{thebibliography}{9}
\bibitem{abelian} \textsc{S. Eliahou and M. Kervaire,}
\textit{Minimal sumsets in infinite abelian groups},  Journal of Algebra \textbf{287} (2005), 449-457.
\bibitem{hopfstiefel} \textsc{S. Eliahou and M. Kervaire,} \textit{Old and new formulas for the Hopf-Stiefel and related functions}, Expositiones Mathematicae \textbf{23} (2005), 127-145.
\bibitem{survey} \textsc{S. Eliahou and M. Kervaire,} \textit{Some extensions of the Cauchy-Davenport Theorem}, Electronic Notes in Discrete Mathematics \textbf{28} (2007), 557–-564.
\bibitem{linear} \textsc{S. Eliahou and C. Lecouvey,}
\textit{On linear versions of some addition theorems},  Submitted (2007).
\bibitem{hou} \textsc{X. D.\ Hou,} \textit{On a vector space analogue of Kneser's theorem},
Linear Algebra and its Applications \textbf{426} (2007), 214-227.
\bibitem{xiang}  \textsc{X. D.\ Hou, K.\ H. Leung and Q. Xiang,} \textit{A
generalization of an addition theorem of Kneser}, Journal of Number Theory 
\textbf{97} (2002), 1-9.
\bibitem{lang} \textsc{S. Lang}, Algebra (3rd edition), Springer Verlag, 2002.
\bibitem{kneser} \textsc{M.\ Kneser,} \emph{Ein Satz \"uber abelsche Gruppen mit Anwendungen auf die Geometrie der Zahlen}, Math. Z. \textbf{61} (1955), 429-434.
\bibitem{nathanson} \textsc{M. B. Nathanson}, Additive Number Theory: Inverse Problems
and the Geometry of Sumsets, Graduate Text in Mathematics 165, Springer-Verlag, New York, 1996.
\end{thebibliography}
\end{document}